\def \ni{\noindent}
\newcommand{\be}{\begin{equation}}
\newcommand{\ee}{\end{equation}}
\newcommand{\ben}{\begin{equation*}}
\newcommand{\een}{\end{equation*}}
\newcommand{\bes}{\begin{eqnarray}}
\newcommand{\ees}{\end{eqnarray}}
\newcommand{\besn}{\begin{eqnarray*}}
\newcommand{\eesn}{\end{eqnarray*}}
\renewcommand{\thefootnote}{\alph{footnote}}	
\newcommand{\txt}{\textrm}
\newtheorem*{theorem*}{Theorem}
\newtheorem*{corollary*}{Corollary}
\newtheorem{lemma}{Lemma}
\newtheorem{definition}{Definition}
\title{Big Free Groups are Almost Free}
\author{Tamer Tlas} 
\date{}
\begin{document}
\maketitle

\begin{abstract}
\ni
\textit{It is shown that the big free group (the set of countably-long words over a countable alphabet) is almost free, in the sense that any function from the alphabet to a compact topological group factors through a homomorphism. This statement is in fact a simple corollary of the more general result proven below on the extendability of homomorphisms from subgroups (of a certain kind) of the big free group to a compact topological group.\\ \\
\let \thefootnote\relax\footnotetext{2014 \textit{Mathematics Subject Classification} 20E05, 20E18, 57M05. Keywords: Free groups, Hawaiian earring, homomorphism extension.}} 
\end{abstract}

It is an elementary fact that the free group over a set $A$, $F(A)$ can be defined in two equivalent ways:

\begin{definition}
$F(A)$ is the set of all finite, reduced words over the alphabet $A$.
\end{definition}

\begin{definition}
\label{def:free}
$F(A)$ is the unique, up to an isomorphism, group such that any function $f : A \to G$, where $G$ is some group, factors through a homomorphism from $F(A)$ to $G$.
\end{definition}

The first definition suggests a natural generalization of the concept of a free group: what happens if the finiteness requirement on the words is dropped? Indeed, the study of such generalizations can be traced as far back as \cite{higman}. Recently, such groups have been under intense study as it was realized that, in addition to their intrinsic interest, they play an important role in the study of the fundamental groups of spaces which are not semilocally simply-connected \cite{desmit, cannon}. Such groups also appear in the study of smooth loop groups \cite{tlas}, and thus are relevant for the theory of gauge connections on principal bundles. \\

Let us give now the precise definitions of the group of  transfinite words. We follow \cite{cannon} closely, to which the reader is referred to for more details if needed.

\begin{definition}
Let $A$ be the alphabet set and let $A^{-1}$ be the set of formal inverses of elements of $A$. A transfinite word is a map $w$ from a countable, linearly ordered set $S$ into $A \cup A^{-1}$ such that the preimage of any element of $A \cup A^{-1}$ is finite. 
\end{definition}

Intuitively, a transfinite word is a countable string of letters such that each letter appears at most finitely many times. Two words $w_1 : S_1 \to A \cup A^{-1}$ and $w_2 : S_2 \to A \cup A^{-1}$ are considered identical if there is a bijection $f : S_1 \to S_2$ such that $w_1 = w_2 \circ f$. In this paper we will only deal with words for which both $A$ and $S$ are countable. We thus take $A = \{a_1, a_2, \dots \}$.\\

Transfinite words can be multiplied in essentially the same way as the finite ones:

\begin{definition}
If $w_1 : S_1 \to A \cup A^{-1}$ and $w_2 : S_2 \to A \cup A^{-1}$ are two words, then $w_1 w_2 : S_1S_2 \to A \cup A^{-1}$ is the transfinite word which acts in the obvious way on the domain $S_1S_2$ consisting of the disjoint union of the elements of $S_1$ with $S_2$ with all elements of $S_1$ preceding those of $S_2$. 
\end{definition}

Reduction is a little more involved to formulate in the transfinite case:

\begin{definition}
Denoting $\{s \in S : a \leq s \leq b \}$ by $[a,b]_S$, we say that the word $w : S \to A \cup A^{-1}$ admits a cancellation if there is a subset $T$ of $S$ and a mapping $\ast : T \to T$ satisfying the following four conditions for all $t \in T$:
\begin{itemize}
\item $\ast$ is an involution.
\item $[t, t^\ast]_S = [t, t^\ast]_T$.
\item $[t,t^\ast]_T = ([t, t^{\ast}]_T)^\ast$.
\item $w(t^\ast) = w(t)^{-1}$.
\end{itemize}
Denoting $S - T$ by $S/\ast$ and $w$ restricted to $S/\ast$ by $w/\ast$ we say that $w/\ast$ arises by a cancellation from $w$. If a word does not admit cancellations, it will be called reduced.
\end{definition}

We shall consider all the words which are related to each other by a cancellation to be equivalent. It can be shown that any such equivalence class contains a unique reduced word and that the set of such equivalence classes becomes a group \cite{cannon}, which is called the big free group over $A$, denoted by $BF(A)$.\\

The big free group is known to be not free, and there is quite some work on its free subgroups \cite{desmit, cannon, eda}. In this paper, we show that the big free group is almost free in the sense that it almost satisfies definition \ref{def:free}. We are going to show that any function from $A$ to a compact topological\footnote{As is customary, we assume that $G$ is Hausdorff.} group $G$ can be factored through a homomorphism from $BF(A)$ to $G$, with the only difference being that this homomorphism is not unique. In fact, we are going to show more: Any homomorphism from a subgroup of $BF(A)$, of a special kind, to a compact topological group $G$ can be extended to a homomorphism from the whole of $BF(A)$ to $G$, which will make the factorizability of a function from $A$ a simple special case. Let us define the special class of subgroups of $BF(A)$ that we need:

\begin{definition}
A subgroup $H$ of $BF(A)$ is called tame if for any reduced $w \in H$ we have that every subword of $w$ is also in $H$, where a subword is the restriction of the word $w: S \to A \cup A^{-1}$ to a set of the form $[a,b]_S$.
\end{definition}

We can now state the main result:

\begin{theorem*}
Let $H$ be a tame subgroup of $BF(A)$ and $f : H \to G$ a homomorphism where $G$ is a compact topological group, then $f$ extends to a homomorphism from $BF(A)$ to $G$. 
\end{theorem*}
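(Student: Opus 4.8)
The plan is to use the compactness of $G$ in the softest possible way, through Tychonoff's theorem, and thereby reduce the whole statement to a purely combinatorial fact about finitely generated subgroups. Consider the space $G^{BF(A)}$ of all functions from $BF(A)$ to $G$ with the product topology; since $G$ is compact, so is this space. Inside it sits the set $\mathcal{E}$ of those $\phi$ which are homomorphisms and which restrict to $f$ on $H$. Because $G$ is Hausdorff, each condition $\phi(xy)=\phi(x)\phi(y)$ (for a fixed pair $x,y$) is the preimage of the diagonal of $G\times G$ under the continuous map $\phi\mapsto(\phi(xy),\phi(x)\phi(y))$, hence closed, and likewise each condition $\phi(h)=f(h)$ (for fixed $h\in H$) is closed. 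Thus $\mathcal{E}$ is an intersection of closed sets, so it is closed and hence compact. The theorem is exactly the assertion that $\mathcal{E}\neq\emptyset$, and for this it suffices, by the finite intersection property, to satisfy any finite subcollection of these conditions simultaneously.

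Such a finite subcollection mentions only finitely many elements $g_1,\dots,g_N$ of $BF(A)$; let $P$ be the subgroup they generate. Any homomorphism $P\to G$ that agrees with $f$ on $H\cap P$, extended in any way outside $P$, satisfies every one of the chosen conditions (the multiplicative ones because the relevant $x_i,y_i,x_iy_i$ all lie in $P$, and the value conditions because the $h_j$ lie in $H\cap P$). Hence the entire theorem reduces to the local statement: for every finitely generated subgroup $P\le BF(A)$, the restriction $f|_{H\cap P}$ extends to a homomorphism $P\to G$. Here the combinatorics of transfinite words takes over. I would invoke the fact (cf.\ \cite{cannon, eda}) that every finitely generated subgroup of $BF(A)$ is free, fix a free basis of $P$ obtained from its reduced words, and then try to arrange that basis to be compatible with $H$.

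The role of tameness is precisely to make this alignment possible, by exhibiting $H\cap P$ as a \emph{free factor} of $P$. Tameness forbids the ``index obstruction'' that would otherwise block extension: were there a reduced $u$ with $u^2\in H$ but $u\notin H$, an extension would demand a square root of $f(u^2)$ in $G$, which need not exist; but tameness, applied to $u$ as a subword of $u^2$, forces $u\in H$, and in the same way it controls all subwords occurring in $P$. I would make this quantitative through a Nielsen--Schreier / Stallings-folding analysis of $P$, refining a free basis so that those basis elements lying in $H$ generate $H\cap P$; this yields a decomposition $P=(H\cap P)*C$ with $C$ free, whereupon $f|_{H\cap P}$ extends by sending a basis of $C$ anywhere (say, to the identity). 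The main obstacle is exactly this last step: establishing the freeness of finitely generated subgroups in the transfinite setting and, above all, proving that the subword-closure encoded in tameness aligns with a free basis of $P$ so that $H\cap P$ becomes a free factor. Once the local statement holds, the finite intersection property forces $\mathcal{E}\neq\emptyset$, producing the desired extension; and the arbitrary choices made on the complementary free parts $C$ are precisely the source of its non-uniqueness.
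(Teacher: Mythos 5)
Your Tychonoff reduction is correct and cleanly separates the topology from the combinatorics: $G^{BF(A)}$ is compact, the defining conditions of $\mathcal{E}$ are closed because $G$ is Hausdorff, and the finite intersection property does reduce the theorem to the local statement that $f|_{H\cap P}$ extends over every finitely generated $P\le BF(A)$. This is a genuinely different use of compactness from the paper, which instead maps each word to a net in $G$ (indexed by excision scales) and applies an ultrafilter limit; your framing is arguably softer and more conceptual. But the reduction is also where your proof stops being a proof: essentially all of the content of the theorem has been pushed into the local statement, and for that you offer only two unsupported claims. The first --- that finitely generated subgroups of $BF(A)$ are free --- is a known but nontrivial theorem (local freeness of the Hawaiian earring group) and is at least citable. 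The second --- that tameness of $H$ forces $H\cap P$ to be a free factor of $P$ --- is the entire theorem in disguise, and you explicitly flag it as ``the main obstacle'' without giving an argument.

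The difficulty with that step is not merely that it is unproven but that the tools you name do not obviously apply. Nielsen reduction and Stallings foldings operate on subgroups of a free group described by words in a fixed ambient basis; here $P$ sits inside the non-free group $BF(A)$, its free basis is an abstract one extracted from transfinite words, and tameness is a condition on \emph{subwords}, i.e.\ on restrictions to intervals in the transfinite letter structure --- not on syllables with respect to any basis of $P$. Bridging these two structures (showing that subword-closure of $H$ aligns with some Nielsen-reduced basis of $P$ so that $H\cap P$ becomes a free factor, or even just finitely generated) is exactly the hard combinatorial work, and it is what the paper's proof replaces by its analysis of singular points, excised intervals $I^r_{x,m}, I^l_{x,m}$, and the resulting finite strings of elements of $H$. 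Easy cases do work out (for cyclic $P$, tameness plus cyclic reduction forces $g^k\in H\Rightarrow g\in H$, so $H\cap\langle g\rangle$ is always trivial or everything), which suggests your route may be completable, but as written there is a genuine gap at its central step.
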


The main idea of the proof of the theorem is that one `excises' small intervals around singular (to be defined below) points, thus replacing the initial word with a finite string of elements from the tame subgroup. This string is then mapped to $G$ by applying $f$ to the product of the elements in this string. After this, one essentially `takes the limit' as the lengths of these removed intervals go to zero.\\ 

Let us begin with the first lemma: 

\begin{lemma}
Let $\mathcal{I}$ be a directed set and let $\mathcal{G}$ be the group (under pointwise multiplication) of all functions from $\mathcal{I}$ to a compact, topological group $G$ (i.e. the group of all nets in $G$ indexed by $\mathcal{I}$). Let $\mathcal{G}_0$ be the subgroup of $\mathcal{G}$ consisting of those nets which are eventually constant. If $\pi_\mathcal{I}: \mathcal{G}_0 \to G$ is the natural homomorphism given by $\pi_\mathcal{I}(g_\alpha) = \lim_\alpha g_\alpha$ then $\pi_{\mathcal{I}}$ has an extension to all of $\mathcal{G}$.
\end{lemma}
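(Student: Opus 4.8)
The plan is to replace the limit along the directed set $\mathcal{I}$ by a limit along a single fixed ultrafilter on $\mathcal{I}$, using compactness of $G$ to guarantee that such a limit always exists. First I would invoke the ultrafilter lemma to fix an ultrafilter $\mathcal{U}$ on $\mathcal{I}$ that refines the filter of tails, i.e.\ one containing every set of the form $\{\beta \in \mathcal{I} : \beta \geq \alpha\}$. Such a $\mathcal{U}$ exists because $\mathcal{I}$ is directed, so the collection of tails has the finite intersection property and generates a proper filter, which is then contained in some ultrafilter. Given any net $g = (g_\alpha) \in \mathcal{G}$, I would define $\pi(g)$ to be its limit along $\mathcal{U}$, namely the unique point $\ell \in G$ such that $\{\alpha : g_\alpha \in V\} \in \mathcal{U}$ for every neighborhood $V$ of $\ell$.

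The existence of this $\mathcal{U}$-limit is exactly where compactness enters, and I would argue it by contradiction. If no point of $G$ were a $\mathcal{U}$-limit of $g$, then each $x \in G$ would have a neighborhood $V_x$ with $\{\alpha : g_\alpha \in V_x\} \notin \mathcal{U}$, hence with complement in $\mathcal{U}$. Extracting a finite subcover $V_{x_1}, \dots, V_{x_n}$ of $G$ and intersecting the corresponding complements would exhibit the empty set as a member of $\mathcal{U}$, which is absurd. Uniqueness of the limit follows immediately from the Hausdorff assumption on $G$. Thus $\pi : \mathcal{G} \to G$ is a well-defined map.

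Next I would verify that $\pi$ is a homomorphism extending $\pi_{\mathcal{I}}$. The extension property is straightforward: if $g$ is eventually constant with value $h$, then some tail is contained in $\{\alpha : g_\alpha = h\}$, and since tails lie in $\mathcal{U}$ this set is in $\mathcal{U}$, forcing $\pi(g) = h = \lim_\alpha g_\alpha = \pi_{\mathcal{I}}(g)$. For the homomorphism property I would use the standard fact that continuous maps preserve $\mathcal{U}$-limits. Applying this to the two coordinate projections $G \times G \to G$ shows that $(\pi(g), \pi(h))$ is the $\mathcal{U}$-limit in $G \times G$ of the net $(g_\alpha, h_\alpha)$, and applying it to the continuous multiplication $m : G \times G \to G$ then gives $\pi(gh) = m(\pi(g), \pi(h)) = \pi(g)\,\pi(h)$, as required.

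The main obstacle, or rather the key point to get right, is committing to a single ultrafilter that serves all nets simultaneously. A naive attempt would send each non-convergent net to a convergent subnet via compactness, but the subnets chosen for $g$, $h$, and $gh$ need not be compatible, so such a construction would fail to respect multiplication. Fixing one $\mathcal{U}$ in advance resolves this precisely because it defines a single limiting procedure that is automatically compatible with the continuous group operations; once this is in place, the remaining verifications are routine.
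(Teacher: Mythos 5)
Your proof is correct and is essentially the paper's argument: the paper phrases the construction as forming the ultrapower $\ast G$ of $G$-valued nets along a fixed ultrafilter on $\mathcal{I}$ and taking the standard part map (which is defined on all of $\ast G$ because compactness makes every element near-standard), and that standard part is exactly your $\mathcal{U}$-limit. You make explicit one detail the paper leaves implicit, namely that the ultrafilter must refine the filter of tails of $\mathcal{I}$ in order for the resulting map to agree with $\pi_{\mathcal{I}}$ on the eventually constant nets.
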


\begin{proof}

Choose an ultrafilter on $\mathcal{I}$ and let $\ast G$ stand for the set of equivalence classes of $G$-valued nets where we consider two nets equivalent if they are identical on an element of the ultrafilter. Proceeding as is customary in nonstandard analysis \cite{goldblatt, encyclo}, it is easy to see that $\ast G$ is a group. Since it is compact, any element of $\ast G$ is near standard, which allows us to define the standard part map from $\ast G \to G$. This map is a homomorphism and is an extension of $\pi_\mathcal{I}$. From here onwards, $\pi_\mathcal{I}$ denotes this extension. 
\end{proof}

Note that the extension above is not unique since it depends on the ultrafilter\footnote{For a concrete example of this nonuniqueness take $G = U(1)$ and $\mathcal{I} = \mathbb{N}$. Then if the odd naturals are in our ultrafilter, $\pi_{\mathcal{I}}$ will assign the value $-1$ to the net $g_n = (-1)^n$. While, if the ultrafilter contained the even naturals, this same net will be assigned the value of $+1$.}.\\

It suffices to prove the Theorem in the case when the tame subgroup $H$ contains all the letters of the alphabet $A$. This follows at once from:

\begin{lemma}
If $A' \subset A$, then any homomorphism $f$ from $BF(A')$ to a compact group $G$ extends to $BF(A)$.
\end{lemma}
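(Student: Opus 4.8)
The plan is to build a retraction homomorphism $\rho : BF(A) \to BF(A')$ and then define the desired extension to be $f \circ \rho$. Write $B = A \setminus A'$ for the set of \emph{new} letters. Given a word $w : S \to A \cup A^{-1}$, let $S' = \{ s \in S : w(s) \in A' \cup (A')^{-1} \}$ and let $\rho_0(w) = w|_{S'}$ be the word over $A'$ obtained by \emph{deleting} every occurrence of a new letter (and of its inverse). Since $S'$ is again countable and linearly ordered and every letter of $A'$ still occurs only finitely often, $\rho_0(w)$ is a legitimate transfinite word over $A'$; I would then set $\rho([w])$ to be the class of $\rho_0(w)$ in $BF(A')$.

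First I would check multiplicativity at the level of words: if $w = w_1 w_2$ then the domain $S_1 S_2$ splits as $S_1' \sqcup S_2'$ and $\rho_0(w_1 w_2) = \rho_0(w_1)\,\rho_0(w_2)$ literally as words, so $\rho$ is multiplicative once it is known to descend to equivalence classes. It is also immediate that $\rho_0$ fixes every word over $A'$, so $\rho$ restricts to the identity on the naturally embedded subgroup $BF(A') \subseteq BF(A)$; hence $f \circ \rho$ will be a homomorphism agreeing with $f$ on $BF(A')$, which is exactly the required extension. Note that compactness of $G$ plays no role in this lemma: the whole content lies in the existence of $\rho$.

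The hard part, and the only real obstacle, is showing that $\rho_0$ descends to equivalence classes, i.e. that deletion commutes with reduction. For this I would show that $\rho_0$ respects a single cancellation, which suffices because a word is linked to its (unique, by \cite{cannon}) reduced representative through cancellations. Suppose $w$ admits a cancellation via $\ast : T \to T$. The condition $w(t^\ast) = w(t)^{-1}$ forces $t$ and $t^\ast$ to carry the same underlying letter, so they are simultaneously old or simultaneously new; consequently $\ast$ preserves the set $T' = T \cap S'$ of old positions of $T$ and restricts to an involution there. I would then verify that $\ast|_{T'}$ satisfies the four defining conditions of a cancellation \emph{inside} the deleted word $\rho_0(w)$: writing $N = S \setminus S'$ for the positions carrying new letters, the interval identities $[t,t^\ast]_{S'} = [t,t^\ast]_{T'}$ and $[t,t^\ast]_{T'} = ([t,t^\ast]_{T'})^\ast$ reduce, after intersecting the original identities with $S'$, to the bookkeeping identity $[t,t^\ast]_T \cap S' = [t,t^\ast]_T - N$. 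Performing this cancellation yields precisely $\rho_0(w/\ast)$, so $\rho_0(w)$ and $\rho_0(w/\ast)$ are equivalent over $A'$, giving well-definedness of $\rho$ and completing the construction. The point to be careful about is exactly these interval conditions, since a cancellation may be transfinite and one must be certain that deleting the new positions neither creates nor destroys the nesting structure required of $\ast$.
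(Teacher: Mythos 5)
Your proof is correct and is essentially the paper's own (first) argument: the paper justifies the lemma in one line by the existence of the letter-deleting retraction $BF(A) \to BF(A')$, and your write-up simply supplies the verification that deletion descends through cancellations, which is the only nontrivial point. (The paper also records an alternative proof via nets and an ultrafilter limit, but only because that technique is reused later; your route is the intended short one.)
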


\begin{proof}
This is an immediate consequence of the fact that there is a retraction from $BF(A)$ to $BF(A')$ (obtained simply by deleting the letters not contained in $A'$)\footnote{We would like to thank the referee for pointing this out.}. However, let us give an alternative proof since it illustrates in a simple context an idea which will be used later. \\

Consider the set of all finite collections of words in $BF(A')$. Order this set by inclusion making it into a directed set $\mathcal{J}$. Suppose $w \in BF(A)$. By deleting the letters not appearing in $A'$, this word splits into a string of elements in $BF(A')$. Let us denote this string by $w'$ (note that $w'$ is \textit{not} a single element of $BF(A')$, rather it is a string of elements of $BF(A')$). For example, if $A = \{a, b, c\}$, $A' = \{a, b\}$ and $w = a^2ba^{-1}cb^2c^2a^3$, then $w'$ is the string of three words $(a^2ba^{-1}) (b^2) (a^3)$. \\

Now pick any element $j \in \mathcal{J}$ and associate to it the group element $f(w_1) f(w_2) \dots $ where $w_1, w_2, \dots$ are the elements of $j$ appearing in the string $w'$ in the order in which they appear in it (the appearance of an inverse of an element of $j$ is counted as an appearance of the element). Note that since any given letter can appear only finitely many times, the product is a finite one. If no elements of $j$ appears in the string $w'$, associate to $j$ the identity element. We thus get a $G$-valued net indexed by $\mathcal{J}$ which we shall denote by $\{ w_j \}_{j \in \mathcal{J}}$. Note that if $w \in BF(A)$ happens to be in $BF(A')$, then this net is eventually constant and is equal to $f(w)$. It is easy to check that if $w_1$ and $w_2$ are any two elements in $BF(A)$ then eventually $ (w_1 \cdot w_2)_j = (w_1)_j (w_2)_j$. This is because the only case when $ (w_1 \cdot w_2)_j = (w_1)_j (w_2)_j$ may fail to hold is when $w_1 = a b c d$ and $w_2 = d^{-1} c^{-1} e f$ where $b, c, e \in BF(A')$ and the word $be$ is irreducible. However, if we let $j_0 = \{b, c, e, bc , c^{-1} e, b e \}$ then for any $j$ containing $j_0$ we do have the equality we want. It follows at once that the map $w \to \{ w_j \}_{j \in \mathcal{J}} \stackrel{\pi_\mathcal{J}}{\to} G$ is a homomorphism which extends $f$. \end{proof}

In view of the above discussion, we shall assume below that $H$ includes all the letters of $A$.\\

Let now $\mathcal{I}$ be the following set

\ben
\mathcal{I} = \bigg \{ \{l_n\}_{n=1}^\infty \quad : \quad  l_{n+1} \leq l_n  \quad , \quad l_n > 0 \quad , \quad \sum_{n=1}^\infty l_n < \infty  \bigg \},
\een

i.e. the set of all monotone non-increasing, strictly positive, real-valued sequences whose sum is convergent. Order this set by stipulating that $\{l_n \}_{n=1}^\infty \prec \{ l'_n\}_{n=1}^\infty \iff l_n \geq l'_n \, , \, \forall n$. It is obvious that $(\mathcal{I}, \prec)$ becomes a directed set. \\

Let $w \in BF(A)$ be a reduced word. Denote by $k_n$ the total number of times the letter $a_n$ appears in $w$, where we count both the letter and its inverse. Suppose that $\iota \in \mathcal{I}$ is such that $\sum_{n=1}^\infty k_n l_n \equiv L_w < \infty$. We decompose $[0,L_w]$ into two sets, a countable collection of disjoint open intervals and its complement, where the intervals correspond to the letters of the word $w$. This is done in the following way: For every letter in the word, associate an open interval of length $l_n$, if the letter is $a_n$ or $a_n^{-1}$, whose starting point is equal to the sum of the lengths of the intervals corresponding to all the letters preceding the given letter. Thus, if for example our word is $a_2 a_1^2 a_2^{-1}$, we get the following intervals $\{(0,l_2) , (l_2,l_2+l_1), (l_2+l_1, l_2 + 2 l_1), (l_2 + 2l_1, 2l_2 + 2 l_1)  \}$. It is obvious in this example and, as is easy to check, true generally, that any two such obtained intervals are disjoint and that each one is a subset of $[0, L_w]$. Denote the complement (in $[0,L_w]$) of the union of these intervals by $C$. $C$ is clearly a closed set. There is an obvious correspondence between subwords of $w$ and subintervals of $[0,L_w]$ with endpoints\footnote{It is irrelevant whether the endpoints are included or not in these subintervals as no letters correspond to endpoints.} in $C$. \\

Let $x \in C$. Note that any such point naturally splits $w$ into the part `before $x$' and the part `after $x$'. We now make:  

\begin{definition}
We say that $x$ is \textit{regular on the right/left} if there is an initial/final segment of the word after/before $x$ which is contained in $H$. If a point is both regular on the left and on the right, then we shall simply say that it is \textit{regular}. Points which are not regular will be called \textit{singular} (note that a singular point can be regular on the right or on the left). The set of all singular points will be denoted by $C'$.
\end{definition}

\begin{lemma}
$C'$ is closed.
\end{lemma}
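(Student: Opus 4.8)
The plan is to prove that the complement of $C'$ in $[0,L_w]$ is open. That complement is the union of the (open) letter--intervals with the set of \emph{regular} points of $C$, since $C' = C \setminus \{\text{regular points}\}$ and $[0,L_w]\setminus C$ is exactly the union of the open letter--intervals. Each letter--interval is automatically disjoint from $C'\subseteq C$, so the whole task reduces to producing, around each regular $x\in C$, an open neighbourhood that meets $C'$ nowhere; equivalently, to showing that regularity is an open condition.

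So I would fix a regular point $x$ and extract its two one--sided witnesses. Regularity on the left gives a final segment of the word before $x$ lying in $H$; under the stated correspondence between subwords and subintervals with endpoints in $C$, this is a subword $w|_{[x',x]}\in H$ with $x'\in C$ and $x'<x$. Symmetrically, regularity on the right gives $w|_{[x,y]}\in H$ with $y\in C$ and $y>x$. (If $x$ equals $0$ or $L_w$, one side is vacuous and only a single witness is available; the same argument then yields a one--sided neighbourhood such as $[0,y)$.)

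The crux is to glue the two witnesses. Because $x\in C$, no letter sits exactly at $x$, so inside the word $w$ the two subwords $w|_{[x',x]}$ and $w|_{[x,y]}$ concatenate to the single subword $w|_{[x',y]}$; since $w$ is reduced this concatenation introduces no cancellation, so $w|_{[x',y]} = w|_{[x',x]}\cdot w|_{[x,y]}$ as elements of $BF(A)$, and this product lies in $H$ as $H$ is a subgroup. Moreover $w|_{[x',y]}$ is itself reduced, being a subword of the reduced word $w$ (a cancellation of the subword would be a cancellation of $w$). Tameness of $H$ therefore applies to $w|_{[x',y]}$, so \emph{every} subword of it belongs to $H$. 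Now for any $z\in(x',y)\cap C$, both $w|_{[x',z]}$ and $w|_{[z,y]}$ are subwords of $w|_{[x',y]}$, hence lie in $H$, and both are non--empty since $x'<z<y$; the first shows $z$ is regular on the left and the second that it is regular on the right, so $z$ is regular. Points of $(x',y)$ outside $C$ lie in letter--intervals and so are not in $C'$. Thus $(x',y)$ is a neighbourhood of $x$ disjoint from $C'$, and $C'$ is closed.

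I expect the only real obstacle to be this gluing step, and in particular the decision to \emph{combine} the two one--sided witnesses into one reduced subword representing a product of two elements of $H$, rather than trying to propagate right-- and left--regularity independently. The latter would only give that each side of $x$ is regular on one side, which does not immediately furnish a two--sided neighbourhood of regular points; invoking tameness once on $w|_{[x',y]}$ delivers full (two--sided) regularity of all interior points simultaneously. The remaining points to check are routine: that subwords of reduced words are reduced, that adjacent subwords multiply without reduction, and the minor one--sided modifications at $x=0$ and $x=L_w$.
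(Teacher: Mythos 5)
Your proposal is correct and follows essentially the same route as the paper: both arguments show that around any regular point there is an interval with endpoints in $C$ whose subword lies in $H$, and then invoke tameness to conclude that every point of $C$ inside that interval is regular, so the regular points form an open set and $C'$ is closed. The paper states the gluing of the two one-sided witnesses into a single subword in $H$ without comment, whereas you spell it out (no cancellation at the junction because $w$ is reduced, then closure of $H$ under products); this is a worthwhile elaboration but not a different proof.
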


\begin{proof}
This is an immediate consequence of the fact that $H$ is a tame subgroup. To see this, pick any regular point. By definition this means that there is an open interval around it (with endpoints in $C$) such that the word corresponding to this interval is in $H$. This implies that any element of $C$ in this interval is also regular for there is an interval around it whose word is contained in $H$ (and all subintervals are also in $H$). Thus the set of regular points is open (in C), which means that $C'$ is closed.
\end{proof}

Fix now $m \in \mathbb{N}$. To every $x \in C'$ we associate two intervals, $I_{x.m}^r$ and $I_{x,m}^l$ in the following way:

\begin{itemize}
\item If $x$ is regular on the right, then $I_{x,m}^r = [x,x] = \{x \}$.
\item If $x$ is not regular on the right, let $\alpha_{x,m} = \txt{sup} \{ x \in C' \cap [x, x + \frac{1}{m} ] \}$. We now have two subcases:
\begin{itemize}
\item $\alpha_{x,m} = x$. In this case $I_{x,m}^r = [x, x + \frac{1}{m}] \cap [0, L_w]$.
\item $\alpha_{x,m} \neq x$. In this case $I_{x,m}^r = [x , \alpha_{x,m}]$.
\end{itemize}
\item $I_{x,m}^l$ is defined with obvious changes in an analogous way.
\end{itemize}

Consider now the set $C_m$ given by: $$C_m = \bigcup_{x \in C'} (I_{x,m}^r \cup I_{x,m}^l). $$ Note that $C' \subset C_m$ and that any connected component of $C_m$, being a subset of $\mathbb{R}$, is an interval.\\

Now that we know that the connected components of $C_m$ are all intervals, we shall classify them into two classes. The first class are those whose length is greater or equal to $\frac{1}{m}$, while the second one are those whose length is strictly less. We have the following lemma:

\begin{lemma}
If $I$ is an interval of the second class, then its endpoints are in $C$. Additionally, unless one of the points involved is an endpoint of $[0,L_w]$, the distance between the two left endpoints of any two intervals of the second class is always greater or equal to $\frac{1}{m}$ with the same being true for right endpoints.
\end{lemma}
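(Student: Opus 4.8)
The plan is to reduce everything to the constituent intervals $I_{x,m}^r$ and $I_{x,m}^l$ out of which $C_m$ is built, and to exploit one elementary fact repeatedly: if a constituent interval meets a connected component $I$ of $C_m$, then, being connected and contained in $C_m$, it lies entirely inside $I$ by maximality. In particular, if $I$ is a component with $\inf I = p$, then any nondegenerate constituent interval having $p$ as its left endpoint extends into $I$ and is therefore a subset of $I$; and, by the same connectedness argument, a nondegenerate constituent interval ending at $p$ on the right and extending to the left of $p$ would force $I$ to contain points below $p$, which is impossible.

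For the first assertion I would catalogue the endpoints of the constituent intervals of length strictly less than $\frac1m$. Every right interval $I_{x,m}^r$ has left endpoint $x\in C'$, and its right endpoint is either $x$, or $\alpha_{x,m}=\sup(C'\cap[x,x+\frac1m])$ — which lies in $C'$ since $C'$ is closed by the previous lemma — or, in the clipped case, $L_w$. Symmetrically, the endpoints of a short left interval lie in $C'\cup\{0\}$. The only way a constituent interval can acquire an endpoint outside $C'\cup\{0,L_w\}$ is through the unclipped cases $[x,x+\frac1m]$ or $[x-\frac1m,x]$, which have length exactly $\frac1m$ and are thus excluded. Since $0,L_w\in C$ and $C'\subseteq C$, all the relevant endpoints lie in the closed set $C$. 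Now if $I$ is second class then every constituent interval meeting $I$ lies in $I$ and hence has length $<\frac1m$, so $p=\inf I$ is a limit of left endpoints of such intervals; as these lie in the closed set $C'\cup\{0\}\subseteq C$, we get $p\in C'\cup\{0\}$, and symmetrically $\sup I\in C'\cup\{L_w\}$.

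For the second assertion, take two distinct second-class components with left endpoints $p<p'$, neither equal to $0$ (the excluded boundary case), so that both lie in $C'$. I first claim that $p$ is not regular on the right. Indeed, $p$ must be regular on the left, for otherwise $I_{p,m}^l$ would be a nondegenerate interval ending at $p$, and the connectedness argument above would place points of the component $I$ strictly below $p=\inf I$. Being singular, $p$ is then not regular on the right, so $I_{p,m}^r=[p,\alpha_{p,m}]$ is nondegenerate and, extending into $I$, satisfies $\alpha_{p,m}\le\sup I$. Since $I$ is second class, $\sup I<p+\frac1m$, whence $\alpha_{p,m}=\sup(C'\cap[p,p+\frac1m])<p+\frac1m$, which means no singular point lies in $(\alpha_{p,m},p+\frac1m]$. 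But $p'=\inf I'>\sup I\ge\alpha_{p,m}$ is singular, so $p'\notin(\alpha_{p,m},p+\frac1m]$ forces $p'>p+\frac1m$. The argument for right endpoints is the mirror image, applied to $I_{q',m}^l$ at the larger right endpoint $q'$. The degenerate sub-case $\alpha_{p,m}=p$ cannot occur for a second-class $I$ unless $I_{p,m}^r$ is clipped at $L_w$, i.e. $\sup I=L_w$, which is precisely the excluded boundary situation and in any event leaves no component to the right of $I$.

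The main obstacle is purely point-set: the components of $C_m$ are not obviously closed, since $C_m$ is an uncountable union of closed intervals, so one cannot simply assert that a component contains its infimum. The connectedness/merging argument of the first paragraph is what sidesteps this, letting me transfer information from a constituent interval abutting $p$ to the component $I$ without knowing a priori that $p\in I$; it is the one step I expect to need to write out with care. Everything else is the bookkeeping of the four endpoint cases and the treatment of the boundary exceptions at $0$ and $L_w$.
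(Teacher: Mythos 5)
Your proof is correct and follows essentially the same route as the paper's: you catalogue the endpoints of the constituent intervals $I_{x,m}^r$, $I_{x,m}^l$, use the closedness of $C'$ and the exclusion of the length-$\frac{1}{m}$ unclipped case to place the component's endpoints in $C$, and then derive the $\frac{1}{m}$ spacing from the fact that a second-class left endpoint must be regular on the left, hence not regular on the right, so that $I_{x,m}^r$ would swallow any nearby singular point into the same component. Your explicit maximality/merging argument is just a spelled-out version of the paper's appeal to the ``basic fact'' about endpoints of a union of intervals, and your separate treatment of the degenerate case $\alpha_{p,m}=p$ is handled implicitly in the paper; no substantive difference.
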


\begin{proof}
We will use the following basic fact from point-set topology:\\

\textit{If an interval is a union of a collection of intervals, then the left endpoint of the original interval is contained in the closure of the left endpoints of the intervals in the collection, with the same being true for right endpoints.}\\

We know that $I$ is a union of intervals of the form $I_{x,m}^r$ and $I_{x,m}^l$ where $m$ is held fixed and $x$ ranges of a subset of $C'$. It is clear that the right endpoint of any $I_{x,m}^l$, which is just $x$, is in $C'$ and thus in $C$. On the other hand, the right endpoint of any $I_{x,m}^r$ can be:

\begin{itemize}
\item Equal to $L_w$ and thus is in $C$.
\item Equal to $\alpha_{x,m}$ in which case it is in $C'$, since $C'$ is closed (recall the definition of $\alpha_{x,m}$ above). In this case the right endpoint is also in $C$.
\item Equal to a point not in $C$. 
\end{itemize}

The last case however, can only happen when $\alpha_{x,m} = x$, in which case the length of $I^r_{x,m}$ is equal to $\frac{1}{m}$. Since $I$ is assumed to be an interval of the second class, i.e. its length is strictly less than $\frac{1}{m}$, this case cannot occur. We thus have that the right endpoints of all the intervals whose union is equal to $I$ are all contained in $C$. It follows that the right endpoint of $I$ is in $C$ as well. Needless to say, the same argument shows that the left endpoint of $I$ is in $C$ as well. Note that the discussion above shows that if an endpoint of $I$ is not an endpoint of $[0,L_w]$, then the endpoint is in fact in $C'$. Moreover, since $C' \subset C_m$, then $I$ is closed since its endpoints cannot be in any other connected component of $C_m$. \\

Now suppose we take two intervals of the second class, $[a_1, b_1]$, $[a_2, b_2]$ and assume $a_1 \neq 0$. Since $a_1 \in C'$, we know that it cannot be regular. It has to be regular on the left since otherwise $I_{a_1, m}^l$ being an interval of nonzero length with right endpoint equal to $a_1$ would not be contained in $[a_1, b_1]$. It follows that $a_1$ is not regular on the right. This implies that $a_2 > a_1 + \frac{1}{m}$, for otherwise $[a_1, a_2] \subset I_{a_1,m}^r \subset [a_1, b_1]$ which is a contradiction. Thus the left endpoints of the intervals of second class (if they are not endpoints of $[0,L_w]$) are always at least $\frac{1}{m}$ apart. The same argument shows that the same is true for the right endpoints. This concludes the proof of this lemma. \end{proof}

In view of the above lemma, and since all intervals are a subset of $[0,L_w]$, it is clear that the number of intervals of the second class must be finite. \\

The number of intervals of the first class is also finite. This is because the sum of their lengths (each of which is greater or equal to $\frac{1}{m}$) has to be finite, being bounded from above by $L_w$.\\

It could happen that an interval of the first class has its two endpoints not in $C$. In this case replace it by the smallest closed interval containing it whose endpoints are in $C$.\\

Summarizing the above construction we have obtained, for a fixed word $w$, a choice of $\iota \in \mathcal{I}$ (with the condition that $L_w < \infty$) and for an $m \in \mathbb{N}$, a finite collection of disjoint intervals which contain all the singular points in $[0, L_w]$ and whose endpoints are always in $C$. Now, note that if we delete all the subwords of $w$ corresponding to these intervals\footnote{Recall that there is a correspondence between subwords and subintervals with endpoints in $C$.}, we will be left with a finite string of subwords $w_1 w_2 \dots w_n$. We now state the following:

\begin{lemma}
Each of $w_1, w_2, \dots, w_n$ is in $H$.
\end{lemma}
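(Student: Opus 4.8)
The plan is to show that each surviving subword $w_i$, which corresponds to a subinterval $[p,q]$ of $[0,L_w]$ with endpoints in $C$, lies in $H$ by exploiting two features of $[p,q]$: first, that its interior contains no singular points, so every point of $C\cap(p,q)$ is regular; and second, that its left endpoint $p$ is regular on the right while its right endpoint $q$ is regular on the left. The interior statement is immediate: since $C'\subset C_m$ and $C_m$ is contained in the union of the deleted intervals, while $(p,q)$ is by construction a gap lying between two consecutive deleted intervals (or between an endpoint of $[0,L_w]$ and the adjacent deleted interval), we have $C'\cap(p,q)=\varnothing$.

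The endpoint regularity requires a short argument from the construction of $C_m$. I would argue that $p$ is regular on the right as follows. If $p$ is an endpoint of $[0,L_w]$ this is immediate, since the relevant one-sided segment is empty and a boundary point that failed to be regular on the right would have been singular and hence excised. Otherwise $p$ is the right endpoint of a deleted interval, and were $p$ not regular on the right then $p\in C'$ and $I^r_{p,m}$ would be a nondegenerate interval $[p,\beta]$ with $\beta>p$ contained in $C_m$. Such an interval lies in a single connected component of $C_m$, hence inside one of the deleted intervals, which would then extend strictly to the right of $p$ and overlap the gap $(p,q)$, contradicting that $w_i$ begins at $p$. The symmetric argument shows $q$ is regular on the left.

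With these facts in hand, the heart of the proof is a supremum (connectedness-type) argument along $C$. Set $X=\{x\in C\cap[p,q]: w_{[p,x]}\in H\}$; this is nonempty since $p\in X$ (the empty subword is the identity) and is bounded above by $q$. Let $s=\sup X$, and note $s\in C$ since $C$ is closed. I would first show $s\in X$: using that $s$ is regular on the left, pick $y\in C$ with $y<s$ and $w_{[y,s]}\in H$, then choose $x\in X$ with $y<x\le s$; by tameness $w_{[x,s]}$, being a subword of $w_{[y,s]}\in H$, lies in $H$, and since the cut at $x\in C$ is clean one gets $w_{[p,s]}=w_{[p,x]}\,w_{[x,s]}\in H$. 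I would then show $s=q$: if $s<q$, then $s$ is regular on the right, so there is $z\in C$ with $s<z\le q$ and $w_{[s,z]}\in H$, whence $w_{[p,z]}=w_{[p,s]}\,w_{[s,z]}\in H$ and $z\in X$, contradicting $s=\sup X$. Hence $q=s\in X$, that is, $w_i=w_{[p,q]}\in H$.

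The step I expect to be the main obstacle is precisely the passage to the supremum $s$: the subgroup $H$ carries no topology making it closed, so one cannot simply take a limit of the elements $w_{[p,x]}\in H$. The device that circumvents this is the interplay of tameness and regularity: regularity on the left produces an honest element $w_{[y,s]}\in H$ straddling $s$, and tameness then lets one pass to the subword $w_{[x,s]}$ attached to a point $x\in X$ arbitrarily close to $s$, so that $w_{[p,s]}$ factors as a product of two elements of $H$. Throughout, the fact that $H$ contains all the letters, together with the endpoints lying in $C$, guarantees that these concatenations are genuine group identities with no cancellation at the junctions.
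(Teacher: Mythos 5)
Your proof is correct, and it is a genuine variant of the paper's argument rather than a reproduction of it. The paper anchors the argument at a fixed letter $a$ of $w_k$: it takes $\alpha$ to be the infimum of left endpoints and $\beta$ the supremum of right endpoints of the $H$-subwords of $w_k$ containing $a$, shows $\alpha$ is regular on the right and $\beta$ on the left (by the same ``otherwise $I^r_{\alpha,m}$ would be a nondegenerate deleted interval meeting the surviving gap'' mechanism you use for $p$ and $q$), glues an initial segment at $\alpha$ onto an $H$-subword containing $a$ via tameness, and finally argues that $[\alpha,\beta]$ must exhaust $w_k$ because any interior extremal point would be regular and permit further extension. Your version dispenses with the letter anchor entirely and runs a one-sided continuous-induction: $X=\{x: w_{[p,x]}\in H\}$, $s=\sup X$, with left-regularity of $s$ plus tameness closing $X$ at $s$ and right-regularity pushing $s$ up to $q$. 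Both proofs rest on exactly the same three ingredients (interior points of a surviving gap are regular; tameness lets you pass to subwords so that overlapping or abutting $H$-subwords multiply to an $H$-word; an extremal point that is still interior can be extended past), so neither is more general, but yours is arguably cleaner: it avoids the slightly delicate ``no overlap with any $H$-subword containing $a$'' contradiction in the paper, and it makes explicit the regularity of the gap's own endpoints $p$ and $q$, which the paper only establishes for its auxiliary points $\alpha$ and $\beta$. You also correctly identify the one place where care is needed --- $H$ has no topology, so $s\in X$ must be obtained algebraically via a straddling element $w_{[y,s]}\in H$ rather than by a limit --- and your handling of it is sound.
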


\begin{proof}
Fix any particular letter $a$ in the subword $w_k$ and consider the set of all words in $H$ which are subwords of $w_k$ containing this particular letter. Every one of such subwords corresponds to a subinterval of the interval corresponding to $w_k$. Let $\alpha$ be the infimum of the left endpoints of these intervals. Similarly, let $\beta$ be the supremum of the right endpoints. We claim that $\alpha$ is regular on the right. This is the case for otherwise $I_{\alpha,m}^r$ would be a nontrivial interval contained in the interval corresponding to $w_k$ which is impossible since all such intervals where deleted. The same argument shows that $\beta$ must be regular on the left.\\

We claim that the word corresponding to $[\alpha, \beta]$ is in $H$. Since $\alpha$ is regular on the right, there is an initial segment of the word corresponding to $[\alpha, \beta]$ such that the word corresponding to it is in $H$. If it was not possible to choose this segment to include the given fixed letter $a$, it would follow that there could be no overlap between the interval corresponding to this initial segment and the interval corresponding to any subword of $w_k$ which is in $H$ and which contains this letter (this is due to the fact that $H$ is a tame subgroup and thus if the intervals corresponding to two words in $H$ overlap, then the word corresponding to the union of the two intervals is also in $H$). However $\alpha$ is the infimum of such intervals and we have a contradiction. The same argument shows that there is a final segment of the word corresponding to $[\alpha, \beta]$ containing the given fixed letter. Again using the fact that $H$ is tame we have that the word corresponding to $[\alpha, \beta]$ is in $H$.\\

If this word was not equal to $w_k$, i.e. if e.g. $\alpha$ was not the left endpoint of $w_k$, it would follow that $\alpha$ is regular and there would be a strictly longer subword than $[\alpha, \beta]$ which would still be in $H$. This would contradict the way $\alpha$ was defined.
\end{proof}

We are now ready to finish:

\begin{proof}[Proof of the Theorem]

The discussion above shows that for any element $\iota \in \mathcal{I}$ (with $L_w < \infty$) and any $m \in \mathbb{N}$ we can associate to $w$ a finite string of words in $H$. Multiplying these words in the order in which they appear in the string gives a word in $H$. Let us denote this word by $h_{\iota, m} (w)$, where we have kept the dependence on $\iota$ and $m$ explicit. It is obvious that if $w \in H$ then $h_{\iota, m} (w) = w$. This is simply because if $w \in H$, there are no singular points and thus no intervals to delete. \\

Let us begin by observing that $h_{\iota, m} (w^{-1}) = ( h_{\iota, m} (w) )^{-1}$. To see this, note that $L_w = L_{w^{-1}}$ as a consequence of the fact that we associate intervals of the same length to a letter and to its inverse. Also, if we have a decomposition of $[0,L_w]$ into intervals corresponding to the letters of $w$, then the decomposition of $[0, L_{w^{-1}}]$ is obtained from it by reflecting through the origin and then shifting to the right by $L_w$ (simply because we exchange the sum of the lengths of the intervals `before' a letter, with those `after' it). Note that if an interval in $[0,L_w]$ corresponded to a letter in $w$, then the reflected and shifted interval in $[0, L_{w^{-1}}]$ corresponds now to the inverse of the letter in $w^{-1}$. \\

Now, it follows from the fact that the definitions of regularity on the right/left are mirror images of each other and from a similar symmetry in the definitions of $I^r_{x,m}, I^l_{x,m}$, that the set $C_m$ we shall delete from $[0, L_{w^{-1}}]$ is obtained from the $C_m$ deleted from $[0,L_w]$ by reflection and translation. This however, means that the finite string of words obtained from $[0,L_{w^{-1}}]$ is the string of words one obtains from $[0,L_w]$ by `reflection', i.e. by rewriting the words in the opposite order, by rewriting the letters in each word in the opposite order and replacing each letter with its inverse (all the intervals are in the opposite order and each interval now corresponds to the inverse of the original letter). It is immediate that the product of the string of words obtained from $[0, L_{w^{-1}}]$ (which is $h_{\iota,m}(w^{-1})$) is the product of the `reflected' words, which is equal to $( h_{\iota, m} (w) )^{-1}$, and we have the equality we want.     \\

Keeping $\iota$ fixed for now, we claim that if $w$ and $\tilde{w}$ are two words such that $L_w, L_{\tilde{w}} < \infty$ then eventually, i.e. for all sufficiently large $m$, we have  
\be
\label{eq:eq}
h_{\iota, m}(w \cdot \tilde{w}) = h_{\iota,m} (w) \cdot h_{\iota,m}(\tilde{w}).
\ee

It is enough to prove this eventual equality for words whose concatenation is irreducible. To see this, assume that this was done. Let $w$ and $\tilde{w}$ be two words and write them as $w=w' \cdot w''$ and $\tilde{w} = w''^{-1} \cdot \tilde{w}'$, where $w''$ is the part that gets reduced when $w$ and $\tilde{w}$ are concatenated. We then have that eventually

\besn
h_{\iota, m} (w \cdot \tilde{w}) & = & h_{\iota, m} (w' \cdot \tilde{w}') \\
& = & h_{\iota, m} (w') \cdot h_{\iota, m} (\tilde{w}') \\
& = & h_{\iota, m} (w') \cdot h_{\iota, m}(w'') \cdot h_{\iota, m}(w''^{-1})  \cdot h_{\iota, m} (\tilde{w}')\\
& = & h_{\iota, m} (w' \cdot w'') \cdot h_{\iota, m} (w''^{-1} \cdot \tilde{w}') \\ 
& = & h_{\iota,m} (w) \cdot h_{\iota,m}(\tilde{w})
\eesn

Therefore assume that $w$ and $\tilde{w}$ are two words whose concatenation is irreducible. How can equality (\ref{eq:eq}) fail to hold? The only way this could happen is when there is a mismatch between the intervals deleted in $w$ and $\tilde{w}$ and the intervals deleted in $w \cdot \tilde{w}$. For instance $\tilde{w}$ could be an element in $H$ (so no subintervals of it should be deleted), while there could be a point in the interval corresponding to $w$ which is not regular on the right and which is sufficiently close to $L_w$ such that the excised interval at this point `spills' over to the interval corresponding to $\tilde{w}$ in $w \cdot \tilde{w}$. This would cause a part of $\tilde{w}$ to be deleted causing (\ref{eq:eq}) to fail.\\

Let us see that this does not happen when $m$ is sufficiently large. There are three cases two consider:

\begin{itemize}
\item $L_w$ is regular on the right and on the left in $[0,L_{w \cdot \tilde{w}}]$: In this case choose $m$ to be large enough so that $L_w$ is more than $\frac{1}{m}$ from the nearest singular point.
\item $L_w$ is regular on the right but not on the left (or vice versa): Let $m$ be large enough so that there are no singular points in $(L_w, L_w +\frac{1}{m}]$. Note that no $I_{x,m}^r$ for $x < L_w$ can have its right endpoint larger than $L_w$ since $\alpha_x \leq L_w \in C'$ for any such $x$. 
\item $L_w$ is not regular on the right nor on the left: In this case any $m$ works. Suppose that $x \in [0, L_w]$ and that $I^r_{x,m}$ `spills over' to $[L_w, L_{w \cdot \tilde{w}}]$, i.e. more precisely $I^r_{x,m}\cap [L_w, L_{w \cdot \tilde{w}}] \neq \phi$ (note that here we are considering $I^r_{x,m}$ for the word $w \cdot \tilde{w}$). However, by assumption $L_w$ is not regular on the right.  We claim that $I^r_{L_w,m} \supset I^r_{x,m}\cap [L_w, L_{w \cdot \tilde{w}}]$. To see this, consider the two cases:
\begin{itemize}
\item $\alpha_{L_w,m} = L_w$: In this case $I^r_{L_w,m} = [L_w, L_w + \frac{1}{m}] \supset \{L_w \} = I^r_{x,m}\cap [L_w, L_{w \cdot \tilde{w}}]$. Note that the last equality holds because $\alpha_{x,m} = L_w$.
\item $\alpha_{L_w,m} \neq L_w$: In this case $I^r_{L_w,m} = [L_w, \alpha_{L_w,m}] \supset [L_w, \alpha_{x,m} ]$. This follows trivially from $\alpha_{L_w,m} \geq \alpha_{x,m}$.
\end{itemize}

Above, we only considered `right' intervals. Needless to say symmetric statements are true regarding $I^l_{L_w,m}$. Thus, the `spillovers' are contained in $I^r_{L_w,m}$ and $I^l_{L_w,m}$. Therefore, in this case, there is no mismatch between the intervals removed from the words whether the words are considered individually or are concatenated.
\end{itemize}

Thus we can always choose $m$ to be large enough so that the intervals deleted from $w$ and from $\tilde{w}$ match those which are deleted from $w \cdot \tilde{w}$. This means that the string of elements of $H$ obtained from $w \cdot \tilde{w}$ is the concatenation of the strings obtained from $w$ and $\tilde{w}$. It follows that (\ref{eq:eq}) holds.\\

Keeping $\iota$ fixed for now, and using $f$ (the given homomorphism from $H$ to $G$), we can associate to $w$ a sequence of elements in $G$, $ m \to g_{\iota, m} =  f( h_{\iota,m}(w)  ) $, where we have kept the dependence on $\iota$ explicit. In view of (\ref{eq:eq}) we have that eventually the sequence that corresponds to  $w \cdot \tilde{w}$ is equal to the sequence $\{g_{\iota, m} h_{\iota, m} \}_{m=1}^\infty$. \\

Using $\pi_\mathbb{N}$ (nets are simply sequences here), we can map the sequence to a single group element $g_\iota$. Let $g_\iota$ be equal to the identity element of $G$ if $L_w = \infty$ (note that for any word $w$, we do have eventually $L_w < \infty$). We thus get for any word a net of group elements $\{ g_\iota \}_{\iota \in \mathcal{I}}$ such that the net corresponding to the product of two words is equal eventually to the pointwise product of the two individual nets. Using $\pi_\mathcal{I}$ again (the nets here are indexed by $\mathcal{I}$ of course), we see that the map $w \to \{ g_\iota \}_{\iota \in \mathcal{I}} \stackrel{\pi_\mathcal{I}}{\to} G$ is the homomorphism extension that we seek.
\end{proof}

We have the following immediate corollary: 

\begin{corollary*}
$BF(A)$ satisfies definition $2$, if $G$ is a compact, topological group.
\end{corollary*}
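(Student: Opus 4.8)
The plan is to deduce the Corollary directly from the Theorem by taking the tame subgroup to be the ordinary free group $F(A)$, viewed as the subgroup of $BF(A)$ consisting of the finite reduced words. First I would verify that $F(A)$ really is a tame subgroup of $BF(A)$. That it is a subgroup is immediate, since the reduction of the product of two finite words is finite and the inverse of a finite word is finite, and these operations agree with the ones inherited from $BF(A)$. Tameness is equally transparent: any subword of a finite reduced word $w \in F(A)$ is itself a finite word, and the restriction of a reduced word to an interval $[a,b]_S$ is again reduced, so every subword of $w$ lies in $F(A)$.

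Next, given an arbitrary function $f : A \to G$ into a compact topological group $G$, I would invoke the ordinary universal property of the free group, namely Definition \ref{def:free} applied to finite words, to obtain a homomorphism $\tilde f : F(A) \to G$ with $\tilde f|_A = f$. Since $F(A)$ is a tame subgroup of $BF(A)$ and $G$ is a compact topological group, the hypotheses of the Theorem are met, so it yields an extension of $\tilde f$ to a homomorphism $\hat f : BF(A) \to G$.

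Finally, because $\hat f$ restricts to $\tilde f$ on $F(A)$ and $\tilde f$ restricts to $f$ on $A \subset F(A)$, we have $\hat f|_A = f$. Thus the given $f$ factors through the homomorphism $\hat f$, which is precisely the factorization clause of Definition \ref{def:free}. I do not expect any genuine obstacle: all the substantive work is carried out in the Theorem, and the only things to check here, that $F(A)$ sits inside $BF(A)$ as a tame subgroup and that a function on the alphabet extends to $F(A)$, are entirely routine. The single caveat, already noted in the introduction, is that $\hat f$ is not unique, since it depends on the ultrafilter choices made inside the proof of the Theorem; hence $BF(A)$ satisfies only the existence half of Definition \ref{def:free} and not its uniqueness half, which is exactly the sense in which it is \emph{almost} free.
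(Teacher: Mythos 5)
Your proposal is correct and follows exactly the paper's route: the paper's proof of the Corollary is the one-line observation that $F(A)$ is a tame subgroup of $BF(A)$, and you have simply spelled out the same argument (tameness of $F(A)$, the universal property of the ordinary free group, then the Theorem) in full detail. The remark on non-uniqueness matches the paper's closing discussion as well.
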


\begin{proof}
This follows at once from the fact that $F(A)$ is a tame subgroup of $BF(A)$.
\end{proof}

Let us finish by noting that the extension in the Corollary is never unique. To see this let $A = \{a_1, a_2, \dots \}, \alpha = a_1 a_2 \dots$ and $H = F(A \cup \alpha)$ (in other words, $H$ is the free group generated by $A \cup \alpha$). Since any subword of $\alpha$ belongs to $H$, $H$ is tame. The theorem guarantees that any homomorphism from $H$ to a compact $G$ extends to $BF(A)$. However, since $H$ is free, there are infinitely many homomorphisms on it which coincide when restricted to $F(A)$ (they only differ in their action on $\alpha$). Thus no extension of a homomorphism from $F(A)$ to $G$ is unique.\\

\textbf{Acknowledgements:} The author would like to thank an anonymous referee for several suggestions which have greatly improved the manuscript and its readability.

\texttt{{\footnotesize Department of Mathematics, American University of Beirut, Beirut, Lebanon.}
}\\ \texttt{\footnotesize{Email address}} : \textbf{\footnotesize{tamer.tlas@aub.edu.lb}} 

\end{document}